\begin{document}

\title{A Note on Near-factor-critical Graphs}
\author{
Kuo-Ching Huang \thanks{Research supported by NSC 
(No. 102-2115-M-126-002).}\\
\normalsize Department of Financial and Computational Mathematics\\
\normalsize Providence University\\
\normalsize Shalu 43301, Taichung, Taiwan\\
\normalsize {\tt Email:kchuang@gm.pu.edu.tw}
\and
Ko-Wei Lih \thanks{Research supported by NSC 
(No. 102-2115-M-001-010).}\\
\normalsize Institute of Mathematics\\
\normalsize Academia Sinica\\
\normalsize Taipei 10617, Taiwan\\
\normalsize {\tt Email:makwlih@sinica.edu.tw}
}

\date{\small \today}

\maketitle

%%%%%%%%%%%%%%%%%macros%%%%%%%%%%%%%%%%%%%%%%%%%%%%%%%%%%%%%%%%%%

\newtheorem{define}{Definition}
\newtheorem{theorem}[define]{Theorem}
\newtheorem{lemma}[define]{Lemma}
\newtheorem{corollary}[define]{Corollary}
%
% Environment PROOF
%
\newenvironment{proof}{
\par
\noindent {\bf Proof.}\rm}%
{\mbox{}\hfill\rule{0.5em}{0.809em}\par}
% 0.809/0.5 is roughly the golden ratio
\newenvironment{remark}{
\par
\noindent {\bf Remark.}\rm}

\baselineskip=16pt
\parindent=0.8cm

%%%%%%%%%%%%%%%%%%%%%%%%%%%%%%%%%%%%%%%%%%%%%%%%%%%%%%%%%%%%%%%%%%
\begin{abstract}

\noindent
A near-factor of a finite simple graph $G$ is a matching that 
saturates all vertices except one. A graph $G$ is said to be  
near-factor-critical if the deletion of any vertex from $G$ 
results in a subgraph that has a near-factor. We prove that a 
connected graph $G$ is near-factor-critical if and only if it 
has a perfect matching. We also characterize disconnected 
near-factor-critical graphs.

\bigskip

\noindent
Keywords:\ {\em perfect matching, factor, near-factor, 
near-factor-critical, Tutte's theorem} 

\medskip

\noindent
Mathematical Subject Classification (MSC) 2010: 05C70
\end{abstract}
%%%%%%%%%%%%%%%%%%%%%%%%%%%%%%%%%%%%%%%%%%%%%%%%%%%%%%%%%%%%%%%%%%

%%%%%%%%%%%%%%%%%%%%%%%%%%%%%%%%%%%%%%%%%%%%%%%%%%%%%%%%%%%%%%%%%%
%
\section{Introduction}
%
%%%%%%%%%%%%%%%%%%%%%%%%%%%%%%%%%%%%%%%%%%%%%%%%%%%%%%%%%%%%%%%%%%

All graphs $G$ considered in this note are finite and simple with 
vertex set $V(G)$ and edge set $E(G)$. The number of vertices of 
$G$ is called the {\em order} of $G$. A {\em matching} $M$ of $G$ 
is a subset of $E(G)$ such that no two edges in $M$ share a common
endpoint. A matching $M$ {\em saturates} a vertex $v$ of $G$, or 
$v$ is said to be $M$-{\em saturated}, if $v$ is an endpoint of 
some edge in $M$. Otherwise, $v$ is said to be $M$-{\em unsaturated}. 
A matching $M$ is called {\em perfect} if every vertex of $G$ is 
$M$-{\em saturated}. A $1$-{\em factor} is synonymous with a perfect 
matching. A {\em near-factor} is a matching that saturates all vertices 
except one.

Let $S \subseteq V(G)$. The subgraph of $G$ obtained from $G$ by deleting 
all vertices of $S$ is denoted by $G \setminus S$. In particular, if $S$ 
is a singleton $\{v\}$, we denote $G \setminus \{v\}$ by $G-v$. A graph 
$G$ is said to be {\em factor-critical} if $G-v$ has a 1-factor for every 
$v \in V(G)$. A factor-critical graph is necessarily of odd order. This 
notion was first introduced by Gallai \cite{gallai} and has been intensively 
studied, e.g. \cite{lourao, match}. We call a graph $G$ {\em near-factor-critical} 
if $G-v$ has a near-factor for every $v \in V(G)$. A near-factor-critical 
graph $G$ is necessarily of even order.

Li et al. \cite{trinks} showed that, for a graph $G$ with a vertex of degree 
one, $G$ has a 1-factor if and only if $G$ is near-factor-critical. They 
asked whether this result could be generalized to any connected graph $G$.
In this note, we are going to give a positive solution to this question.
Since the union of two disjoint odd cycles has no 1-factor and is 
near-factor-critical, the connectedness of $G$ is essential for a generalization.

%%%%%%%%%%%%%%%%%%%%%%%%%%%%%%%%%%%%%%%%%%%%%%%%%%%%%%%%%%%%%%%%%%
%
\section{Main results}
%
%%%%%%%%%%%%%%%%%%%%%%%%%%%%%%%%%%%%%%%%%%%%%%%%%%%%%%%%%%%%%%%%%%

A {\em component} of a graph $G$ is a maximal connected subgraph of $G$. 
An {\em odd}, or {\em even}, component is a component having odd, or even, 
number of vertices. The number of odd components is denoted by $o(G)$. 
If a graph $G$ has a near-factor $M$, then there exists a unique 
$M$-unsaturated vertex in $G$ and is denoted by $u(M)$.

\begin{lemma} \label{lem-1}
Let $G$ be a connected graph. Suppose that, for $S \subseteq V(G)$, 
there is an odd component $H$ of $G \setminus S$. For any vertex 
$v \notin V(H)$, if $M$ is a near-factor of $G-v$ and $u(M) \notin 
V(H)$, then there is an edge of $M$ joining a vertex of $S$ with 
a vertex of $H$.
\end{lemma}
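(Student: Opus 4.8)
The plan is to argue by contradiction, exploiting the odd parity of $H$. First I would observe that every vertex of $H$ is $M$-saturated: the only vertex of $G-v$ left uncovered by the near-factor $M$ is $u(M)$ (and $v$ itself has been deleted), and by hypothesis neither $v$ nor $u(M)$ lies in $V(H)$. Hence the edges of $M$ incident with $V(H)$ together cover all of $V(H)$.

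Next I would pin down where those matching edges can go. Since $H$ is a \emph{component} of $G \setminus S$, no edge of $G$ joins a vertex of $H$ to a vertex lying in another component of $G \setminus S$; therefore every neighbour in $G$ of a vertex of $H$ lies either in $V(H)$ or in $S$. Consequently each edge of $M$ incident with a vertex of $H$ has its other endpoint in $V(H)$ or in $S$.

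Now suppose, for contradiction, that no edge of $M$ joins a vertex of $S$ with a vertex of $H$. Then for every $w \in V(H)$ the $M$-partner of $w$ again lies in $V(H)$, so $M_H := \{\, e \in M : e \subseteq V(H) \,\}$ is a matching of $H$ saturating every vertex of $H$, i.e. a perfect matching of $H$. But $H$ is an odd component, so $|V(H)|$ is odd and $H$ can have no perfect matching, a contradiction. Hence some edge of $M$ joins $S$ to $H$, as claimed.

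I do not expect a genuine obstacle here; the only point that needs care is the second step, namely that all edges of $G$ leaving $V(H)$ land in $S$ — this is exactly the content of $H$ being a full component of $G \setminus S$, not merely some connected subgraph, and it is what forces the would-be matching $M_H$ to be perfect on the odd graph $H$.
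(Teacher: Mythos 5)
Your proof is correct and follows essentially the same route as the paper: show every vertex of $H$ is $M$-saturated, then use the odd order of $H$ to force an edge of $M$ leaving $V(H)$, which must end in $S$. If anything, your justification of the last step (edges leaving $V(H)$ can only land in $S$ because $H$ is a full component of $G \setminus S$) is stated more precisely than the paper's brief appeal to connectedness.
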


\begin{proof}
Since $u(M) \notin V(H)$, every vertex of $H$ is $M$-saturated. 
Since $v \notin V(H)$, if each edge of $M$ has zero or two endpoints 
in $H$, then $H$ is of even order, a contradiction. By the connectedness 
of $G$, there must exist an edge of $M$ having one endpoint in $H$ and 
one endpoint in $S$.
\end{proof}

\begin{theorem}\label{main-1}
The following conditions are equivalent for a connected graph $G$.
\begin{enumerate}
\item
$G$ has a 1-factor.
\item
$o(G \setminus S) \leqslant |S|$ for all $S \subseteq V(G)$.
\item
$G$ is near-factor-critical.
\end{enumerate}
\end{theorem}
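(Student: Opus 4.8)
The plan is to prove the cycle of implications (1) $\Rightarrow$ (3) $\Rightarrow$ (2) $\Rightarrow$ (1). The step (2) $\Rightarrow$ (1) is exactly Tutte's $1$-factor theorem, which I will invoke without reproof. For (1) $\Rightarrow$ (3): if $F$ is a $1$-factor of $G$ and $v \in V(G)$, let $e$ be the unique edge of $F$ incident with $v$; then $F \setminus \{e\}$ is a matching of $G-v$ saturating every vertex except the other endpoint of $e$, hence a near-factor of $G-v$. Thus $G$ is near-factor-critical, and this direction is routine.

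The substance of the theorem is (3) $\Rightarrow$ (2), which I would prove by contraposition. Suppose $o(G \setminus S) > |S|$ for some $S \subseteq V(G)$. Since $G$ is near-factor-critical, $|V(G)|$ is even; writing $|V(G)| = |S| + \sum_i |C_i|$, where the $C_i$ are the components of $G \setminus S$, and reducing modulo $2$ gives $o(G \setminus S) \equiv |S| \pmod 2$, so in fact $o(G \setminus S) \geqslant |S| + 2$. Moreover $S \neq \emptyset$, since if $S = \emptyset$ then connectedness together with even order forces $o(G) = 0$, contradicting $o(G) \geqslant 2$. Now fix a vertex $v \in S$ and a near-factor $M$ of $G-v$, which exists by hypothesis. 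The unsaturated vertex $u(M)$ lies in at most one component of $G \setminus S$, so among the at least $|S|+2$ odd components of $G \setminus S$ there are at least $|S|+1$ of them, say $H_1, \dots, H_{|S|+1}$, with $u(M) \notin V(H_j)$ for every $j$. Since $v \in S$ also gives $v \notin V(H_j)$, Lemma~\ref{lem-1} applies to each $H_j$ and produces an edge $e_j \in M$ joining a vertex of $S$ with a vertex of $H_j$. These edges are pairwise distinct, because their $H_j$-endpoints lie in distinct components, and since $M$ is a matching they have pairwise distinct endpoints in $S$. This exhibits $|S|+1$ distinct vertices of $S$, which is absurd. Hence $o(G \setminus S) \leqslant |S|$ for all $S \subseteq V(G)$, establishing (2).

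I expect the main obstacle to be arranging the counting in (3) $\Rightarrow$ (2) so that the slack is sufficient. The two decisive moves are: (a) upgrading the strict inequality $o(G \setminus S) > |S|$ to $o(G \setminus S) \geqslant |S|+2$ using the parity of $|V(G)|$, which creates room to absorb the loss of one component; and (b) choosing the deleted vertex $v$ from inside $S$ rather than arbitrarily, so that only $u(M)$, and not $v$ itself, can spoil the hypothesis $u(M) \notin V(H)$ of Lemma~\ref{lem-1}. With these in place, the matching $M$ simply cannot supply enough distinct endpoints in $S$, and the contradiction follows at once.
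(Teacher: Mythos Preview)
Your proof is correct and follows essentially the same approach as the paper: both establish (3) $\Rightarrow$ (2) by contraposition, choosing $v \in S$, applying Lemma~\ref{lem-1} to the odd components of $G\setminus S$ that avoid $u(M)$, and counting the resulting distinct $M$-endpoints in $S$. The only cosmetic difference is that you invoke the parity upgrade $o(G\setminus S) \geqslant |S|+2$ to absorb the possible loss of one component to $u(M)$, whereas the paper skips the parity step and instead notes that $v$ itself is an extra vertex of $S$ not covered by $M$, obtaining $|S| \geqslant 1 + (o(G\setminus S)-1) > |S|$ directly.
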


\begin{proof}
The equivalence between conditions 1 and 2 is the well-known Tutte's
Theorem \cite{tutte}. It is clear that condition 1 implies condition 3.
It remains to prove that condition 3 implies condition 2.

Suppose that condition 2 fails for $G$. Then there is a subset $S \subseteq 
V(G)$ such that $o(G \setminus S)> |S|$. Choose an arbitrary vertex $v \in S$. 
Then $G-v$ has a near-factor $M$. By Lemma \ref{lem-1}, $G \setminus S$ has 
at least $o(G \setminus S)-1$ odd components $H$ such that there is an edge 
of $M$ having one endpoint in $H$ and one endpoint in $S$. Thus, $|S| \geqslant 
1+o(G \setminus S)-1=o(G \setminus S)> |S|$, a contradiction.
\end{proof}

\begin{theorem}\label{main-2}
Let $G$ be a disconnected graph. Then $G$ is near-factor-critical if and 
only if one of the following holds.
\begin{enumerate}
\item
All components of $G$ are even and each of them has a 1-factor.
\item
There are only two components $H_1$ and $H_2$ of $G$ and each of 
them is factor-critical.
\end{enumerate}
\end{theorem}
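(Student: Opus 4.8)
The plan is to argue both implications directly from the definitions, using Theorem~\ref{main-1} on the individual components. For the backward direction, suppose first that condition~1 holds. Any vertex $v$ lies in a unique component $H$, which is connected and has a $1$-factor, hence is near-factor-critical by Theorem~\ref{main-1}; so $H-v$ has a near-factor, and adjoining the $1$-factors of the remaining components yields a near-factor of $G-v$. If instead condition~2 holds, then $v$ lies in, say, $H_1$; since $H_1$ is factor-critical, $H_1-v$ has a $1$-factor, and $H_2$ has a near-factor (pick any $w\in V(H_2)$ and a $1$-factor of $H_2-w$), so their union is a near-factor of $G-v$. Thus $G$ is near-factor-critical in either case.

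For the forward direction, write the components of $G$ as $C_1,\dots,C_t$ with $t\geqslant 2$. The key observation is this: if $v\in V(C_i)$ and $M$ is a near-factor of $G-v$, then for each $j\neq i$ the set $M\cap E(C_j)$ is a matching of $C_j$, so whenever $C_j$ is not entirely $M$-saturated it must contain the unique unsaturated vertex $u(M)$. Since $u(M)$ is unique, at most one of the graphs $C_1-v,\,C_2,\dots,C_{i-1},\,C_{i+1},\dots,C_t$ can fail to be fully $M$-saturated, and every fully saturated $C_j$ necessarily has a $1$-factor. I would then split into two cases according to whether $G$ has an even component.

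Case A: some component $C_i$ is even. Deleting a vertex $v$ of $C_i$ makes $C_i-v$ of odd order, hence not fully saturated, so $u(M)\in V(C_i)\setminus\{v\}$ and every other component is entirely $M$-saturated; thus each $C_j$ with $j\neq i$ has a $1$-factor, in particular is even. As $t\geqslant 2$, this already forces all components of $G$ to be even; repeating the argument after deleting a vertex of some $C_j\neq C_i$ then shows that $C_i$ too has a $1$-factor, giving condition~1. Case B: every component of $G$ is odd. Deleting $v\in V(C_1)$ leaves $C_2,\dots,C_t$ as odd components of $G-v$, none of which can be fully saturated; by the key observation $t-1\leqslant 1$, so $t=2$. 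Finally, for any $v\in V(C_1)$ the odd component $C_2$ must contain $u(M)$, so $M$ saturates all of $C_1-v$, i.e.\ $C_1-v$ has a $1$-factor; hence $C_1$ is factor-critical, and by symmetry so is $C_2$, giving condition~2.

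The backward direction and the parity bookkeeping are routine; the one place needing care is the bootstrapping in Case~A, where a single deletion only shows that the \emph{other} components have $1$-factors, so a second, appropriately chosen deletion is required to settle the component from which the first vertex was removed. Beyond organizing these cases I expect no real obstacle.
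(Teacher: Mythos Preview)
Your argument is correct and follows essentially the same route as the paper's proof: both hinge on the observation that an odd-order piece of $G-v$ must contain the unique unsaturated vertex $u(M)$, and both use a second, well-chosen deletion to finish the all-even case. The only cosmetic differences are that the paper first proves separately that even and odd components cannot coexist (your Case~A derives this as a byproduct), and your appeal to Theorem~\ref{main-1} in the backward direction is unnecessary---a $1$-factor minus the edge through $v$ is already a near-factor of $H-v$.
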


\begin{proof}
The sufficiency is straightforward. We now prove the necessity.
Suppose that there exist an even component $F$ and an odd component 
$H$ of $G$. Choose an arbitrary vertex $v$ from $F$. Then $G-v$ has 
a near-factor $M$. Since both $F-v$ and $H$ have odd number of vertices,
each of them should contain an $M$-unsaturated vertex. This contradicts 
the uniqueness of $u(M)$. Therefore, $G$ consists of either all even or 
all odd components. In the latter case, the number of odd components is 
even since the order of $G$ is even.

Suppose that $G$ consists of even components $F_1, F_2, \ldots , F_p$, 
where $p \geqslant 2$. Choosing an arbitrary vertex $x$ of $F_1$, $G-x$ 
has a near-factor $M_1$. Since $F_1-x$ is of odd order, $u(M_1)$ belongs 
to $F_1-x$. Thus, $M_1$ restricted to $F_i$ is a 1-factor of $F_i$ for 
each $i \geqslant 2$. Similarly, $F_1$ can be argued to have a 1-factor 
if the vertex $x$ was chosen from $F_2$.

Next, suppose that $G$ consists of odd components $H_1, H_2, \ldots , 
H_{2q}$ for some $q \geqslant 1$. Choosing an arbitrary vertex $z$ of 
$H_1$, $G-z$ has a near-factor $M_2$ such that $u(M_2) \notin V(H_1)$ 
since $H_1-z$ is of even order. Each odd component $H_j$, $j \geqslant 
2$, should contain an $M_2$-unsaturated vertex. It follows that $q=1$ and 
$H_1-z$ has a 1-factor. Similarly, $H_2-z$ can be argued to have a 
1-factor if the vertex $z$ was chosen from $H_2$.
\end{proof}

\bigskip

\begin{remark}
It follows from Theorems \ref{main-1} and \ref{main-2} that the time 
complexity for recognizing near-factor-critical graphs is dominated 
by the complexity for recognizing factor-critical graphs. The latter 
was determined in \cite{lourao}, using the well-known maximum matching 
algorithm of \cite{max},  to run in $O(n^{1/2}m)$ time, where $n=|V(G)|$ 
and $m=|E(G)|$.
\end{remark}

%%%%%%%%%%%%%%%%%%%%%%%%%%%%%%%%%%%%%%%%%%%%%%%%%%%%%%%%%%%%%
%

\end{document}